\documentclass[a4paper, reqno, 14pt]{amsart}
\usepackage[utf8]{inputenc}

\setlength{\hoffset}{-1cm}
\setlength{\voffset}{-0.5cm}
\addtolength{\textwidth}{2cm}
\addtolength{\textheight}{0.5cm}

\usepackage{amsthm,amsfonts,amssymb,amsmath, microtype}
\usepackage[all]{xy}
\usepackage{xr-hyper}
\usepackage[colorlinks=true]{hyperref}
\usepackage{verbatim}
\usepackage{pdfsync}

\newcommand{\BF}{{\mathbb {F}}}

\newcommand{\BN}{{\mathbb {N}}}

\newcommand{\BQ}{{\mathbb {Q}}}

\newcommand{\BV}{{\mathbb {V}}}

\newcommand{\BX}{{\mathbb {X}}}
\newcommand{\BY}{{\mathbb {Y}}}
\newcommand{\BZ}{{\mathbb {Z}}}

\newcommand{\CD}{{\mathcal {D}}}

\newcommand{\CN}{{\mathcal {N}}}
\newcommand{\CO}{{\mathcal {O}}}

\newcommand{\CZ}{{\mathcal {Z}}}

\newcommand{\End}{{\mathrm{End}}}

\newcommand{\charpol}{{\mathrm{charpol}}}

\newcommand{\Hom}{{\mathrm{Hom}}}

\newcommand{\Lie}{{\mathrm{Lie}}}

\newcommand{\Spec}{{\mathrm{Spec}}}

\newcommand{\id}{{\mathrm{id}}}

\newtheorem{theorem}{Theorem}

\newtheorem{lemma}[theorem]{Lemma}

\theoremstyle{definition}

\newtheorem{remark}[theorem]{Remark}

\begin{document}

\title{On the regularity of special difference divisors}
\author{Ulrich Terstiege}

\date{\today}
\maketitle
In the paper \cite{T} a close connection between intersection multiplicities of special cycles on the Shimura variety for $GU(1,2)$ and Fourier coefficients of the derivative of a certain Eisenstein series for $U(3,3)$ is established. This confirms in the case $n=3$ a conjecture of Kudla and Rapoport for the Shimura variety for $GU(1,n-1)$, see \cite{KR} and \cite{KR2}. This conjecture can be reduced to a local statement on intersection multiplicities on suitable Rapoport-Zink spaces stated in \cite{KR}. The case of non-degenerate (i.e. $0$-dimensional) intersections which can be reduced to the case $n=2$ is proved in \cite{KR} and \cite{KR2}. For $n\geq 3$, the intersection of the special cycles in general has positive dimension. To a special cycle one associates a difference divisor, see below. A crucial ingredient of the proof of the Kudla-Rapoport conjecture for $n=3$ is the regularity of these difference divisors. In this note we prove the regularity of special difference divisors in arbitrary dimension. It is hoped that this will help to prove the Kudla-Rapoport conjecture for arbitrary $n$.

 Let us recall the setting. 
Let $n$ be a positive integer and let $p\geq 3$ be a prime. Let $\BF=\overline\BF_p$ and let $W=W(\BF)$ be its ring of Witt-vectors. Let  $\CN:=\CN_n:=\CN(1,n-1)$ be the Rapoport-Zink space over $W$ parameterizing  tuples $(X,\iota,\lambda,\rho)$ over $W$-schemes $S$ such that $p$ is locally nilpotent in $\CO_S$. Here a tuple $(X,\iota,\lambda,\rho)$ over $S$ consists of the following objects. First, $X$ is a $p$-divisible group of dimension $n$ and height $2n$ over $S$, and  $\iota: \BZ_{p^2}\rightarrow \End(X)$ is a homomorphism satisfying the determinant condition of signature $(1,n-1)$, i.e. $$
\charpol (\iota(a), \Lie X)(T)=(T-\phi_0(a))(T-\phi_1(a))^{n-1} \in \mathcal{O}_S [ T ], 
$$ where $\phi_0$ and $\phi_1$ are the two embeddings of $\mathbb Z_{p^2}$ into $W$.  Further $\lambda$ is a principal polarization of $X$ such that for the Rosati involution we have $\iota^*(a)=\iota(\overline a)$ for all $a\in \BZ_{p^2}$, and $$\rho: X\times_S \overline S\rightarrow \BX\times_{\Spec \  \BF} \overline S$$ is a $\BZ_{p^2}$-linear quasi-isogeny of height $0$. Here $\overline S= S\times_{\Spec \ W} \Spec \ \BF$ and $(\BX,\iota_{\BX},\lambda_{\BX})$ is a fixed triple over $\Spec \ \BF$ as before and where $\BX$ is also required to be supersingular. We also require that locally
up to a scalar in $\BZ_p^{\times}$ we have the identity ${\rho}^{\vee}\circ\lambda_{\mathbb{X}}\circ \rho = \lambda$.
Let $(\BY,\iota_{\BY},\lambda_{\BY})$ over $\BF$ be the fixed supersingular object for $n=1$ and let $\overline \BY$ be same object but the $\BZ_{p^2}$-action replaced by its conjugate. It has a canonical lift $\overline Y$ over $W$, cf. \cite{G}.   The space of special homomorphisms is defined  as $\BV=\Hom_{\BZ_{p^2}}(\overline \BY,\BX)\otimes \BQ$, cf. \cite{KR}. It is an $n$-dimensional hermitian $\BQ_{p^2}$-vector space with hermitian form $h$ given by $$h(x,y)= \lambda^{-1}_{\overline{\BY}}\circ {y^{\vee}}\circ \lambda_{\BX}\circ x \in \End_{\BZ_{p^2}}(\overline{\BY})\otimes \BQ \cong \BQ_{p^2},$$
where the last isomorphism is via $\iota_{\overline \BY}^{-1}$, and where $y^{\vee}$ denotes the dual of $y$. By the valuation of a special homomorphism $j$ we mean the $p$-adic valuation of $h(j,j)$. Given $j_1,...,j_m\in\BV$,  the fundamental matrix $T(j_1,...,j_m)$ of $j_1,...,j_m$ is the hermitian $m\times m$ matrix with entry $h(j_i,j_k)$ at $i,k$. For $j\in \BV$ the special cycle $\CZ(j)$ is the closed formal subscheme of $\CN$ such that $\CZ(S)$ is the set of all $(X,\iota,\lambda,\rho)$ over $S$ such that the quasi-homomorphism $$\overline{\BY}\times_{\BF} \overline{S}\stackrel{j}{\longrightarrow}{\BX}\times_{\BF} \overline{S}\stackrel{\varrho^{-1}}{\longrightarrow}X\times_S \overline{S} $$ lifts to a homomorphism $\overline Y\times_{\Spec \ W} S \rightarrow X$. If the valuation of $j\neq 0$ is non-negative then $\CZ(j)$ is a relative divisor, i.e. it is flat over $W$; if the valuation of $j$ is negative then $\CZ(j)$ is empty.  Further we define  the special difference divisor $\CD(j)$ as  $\CD(j)=\CZ(j)-\CZ(j/p)$. Thus, if a local equation for $\CZ(j)$ is given by $f=0$ and a local equation for $\CZ(j/p)$ is given by $g=0$ then a local equation for $\CD(j)$ is given by $fg^{-1}=0$. Note here that $g$ divides $f$ since obviously $\CZ(j/p)\subseteq \CZ(j)$. See \cite{KR} and \cite{T} for more information about these notions.

 Recall from \cite{RTZ} that an $\BF$-valued point $x$ of $\CN$ is called {\emph{super-general}} if there is no special homomorphism $j$ of valuation $0$ such that $x\in \CZ(j)(\BF)$. In \cite{RTZ}, Theorem 10.7, it is proved that if $x\in \CN(\BF)$ is a super-general point and if $j\in \BV$ is such that $x\in \CZ(j)(\BF)$ but $x\not\in \CZ(j/p)(\BF)$, then the special fiber $\CZ(j)_p$ of $\CZ(j)$ is regular at $x$.

\begin{theorem}\label{mainth}
Let $j$ be a special homomorphism. Then the special difference divisor $\CD(j)$ is regular. 
\end{theorem}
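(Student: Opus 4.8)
The plan is to verify regularity of $\CD(j)$ at each closed point, that is, at each $x\in\CN(\BF)$ in the support of $\CD(j)$. Fix such an $x$, set $R=\CO_{\CN,x}$ --- a regular local ring of dimension $n$, with maximal ideal $\mathfrak m$ --- and let $d=fg^{-1}$ be a local equation for $\CD(j)$, where $f$ and $g$ are local equations for $\CZ(j)$ and $\CZ(j/p)$. Since $R$ is regular, $\CD(j)=V(d)$ is regular at $x$ precisely when the image of $d$ in $\mathfrak m/\mathfrak m^2$ is nonzero. I would in fact aim for the stronger transversality that $d$ and $p$ are linearly independent in $\mathfrak m/\mathfrak m^2$, equivalently that the special fiber $\CD(j)_p=V(d,p)$ is regular at $x$; because $\mathfrak m^2\subseteq\mathfrak m^2+(p)$ this forces $d\notin\mathfrak m^2$, and it simultaneously records the $W$-flatness of $\CD(j)$. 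The proof then splits according to whether $x$ is super-general.

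I would first treat super-general $x$ by induction on the valuation $v=\ord_p h(j,j)$ of $j$. If $v\le 1$, then $j/p$ has negative valuation, so $\CZ(j/p)=\emptyset$, the function $g$ is a unit, and $\CD(j)=\CZ(j)$ near $x$; as $x\in\CZ(j)(\BF)\setminus\CZ(j/p)(\BF)$, Theorem 10.7 of \cite{RTZ} gives that $\CZ(j)_p=V(f,p)$ is regular at $x$, which is exactly the sought transversality. If $v\ge 2$, then $x\in\CZ(j/p)(\BF)$ and \cite{RTZ} does not apply to $j$ itself; here I would use the local description of special cycles at a super-general point from \cite{RTZ} to reduce to the one-variable Gross--Keating (quasi-canonical) picture. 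In that picture the cycles $\cdots\subseteq\CZ(j/p^2)\subseteq\CZ(j/p)\subseteq\CZ(j)$ are nested, the innermost nonempty member $\CZ(j/p^k)$ (with $k$ maximal such that $x\in\CZ(j/p^k)(\BF)$) is regular at $x$ by Theorem 10.7 of \cite{RTZ} applied to $j/p^k$, and $\CD(j)=\CZ(j)-\CZ(j/p)$ is identified with a single quasi-canonical divisor; since such divisors are regular, the super-general case follows.

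For $x$ not super-general I would reduce the dimension $n$. By definition there is a special homomorphism $j'$ with valuation $0$ and $x\in\CZ(j')(\BF)$. The cycle $\CZ(j')$ is formally smooth and canonically isomorphic to the lower space $\CN_{n-1}$, its space of special homomorphisms being the orthogonal complement $\langle j'\rangle^{\perp}\subseteq\BV$. The crucial step is a restriction formula: decomposing $j=aj'+\ov\jmath$ with $\ov\jmath\in\langle j'\rangle^{\perp}$, I would show that near $x$ the divisor $\CD(j)$ corresponds, under $\CZ(j')\cong\CN_{n-1}$, to the special difference divisor $\CD(\ov\jmath)$ on $\CN_{n-1}$ (after absorbing the $\BZ_{p^2}$-integral part of $aj'$, which leaves the cycles unchanged along $\CZ(j')$). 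As $\CZ(j')\hookrightarrow\CN$ is a regular immersion into a regular formal scheme, regularity of $\CD(\ov\jmath)$ at $x$ on $\CN_{n-1}$ then yields regularity of $\CD(j)$ at $x$ on $\CN$. The induction terminates once $x$ has become super-general, which is the case already handled.

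The main obstacle is this restriction formula in the non-super-general case: one must control how $\CZ(j)$, and hence $\CD(j)$, pulls back to the smooth cycle $\CZ(j')\cong\CN_{n-1}$ --- including the possibility that $\CZ(j')$ is itself a component of $\CZ(j)$, and the bookkeeping of the coefficient $a$. I expect this to require the intersection-theoretic properties of special cycles, namely the proper intersection of $\CZ(j)$ with $\CZ(j')$ and the linear invariance of the $\CZ$-construction, rather than deformation theory alone. A secondary technical point is the passage, for super-general $x$ with $v\ge 2$, from Theorem 10.7 of \cite{RTZ} to the explicit quasi-canonical decomposition of $\CZ(j)$.
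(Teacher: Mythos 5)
Your outline is on track in two places. The minuscule case $x\in\CZ(j)(\BF)\setminus\CZ(j/p)(\BF)$ at a super-general point is indeed exactly Theorem 10.7 of \cite{RTZ}, and your reduction at a non-super-general point is essentially the paper's Lemma \ref{bw0} combined with the slicing step in its Lemma \ref{elemreg}; the coefficient bookkeeping you flag is handled there by first writing $j=p^{\gamma}j'$ with $x\in\CZ(j')(\BF)\setminus\CZ(j'/p)(\BF)$ and then decomposing $j'=\alpha j_0+\beta j_1$ with $j_1\perp j_0$ and $x\in\CZ(j_1)(\BF)\setminus\CZ(j_1/p)(\BF)$, which forces $\beta\in\BZ_{p^2}^{\times}$ and keeps every division by $p$ integral. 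The genuine gap is the remaining case, which is the heart of the theorem: points $x$ with $x\in\CZ(j/p)(\BF)$. (Note this case is not detected by the valuation: $v\geq 2$ does \emph{not} imply $x\in\CZ(j/p)(\BF)$; the relevant invariant is the maximal $\gamma$ with $x\in\CZ(j/p^{\gamma})(\BF)$.) For such $x$ you appeal to ``the local description of special cycles at a super-general point from \cite{RTZ}'' and an identification of $\CD(j)$ with ``a single quasi-canonical divisor''. No such results exist: Theorem 10.7 of \cite{RTZ} is strictly confined to the minuscule situation $x\notin\CZ(j/p)(\BF)$, and the quasi-canonical picture for difference divisors is special to $n\leq 2$; producing such a structure statement for $n\geq 3$ is essentially as hard as the theorem itself.

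Worse, your declared target in this case is false. You propose to prove the stronger transversality that $d$ and $p$ are linearly independent in $\mathfrak m/\mathfrak m^{2}$, i.e.\ that $\CD(j)_p$ is regular at $x$. But when $\gamma\geq 1$ the special fiber of the difference divisor is in general \emph{not} regular at $x$ --- this is precisely the content of the paper's closing Remark --- so only $\CD(j)$ itself is regular, with an equation that to first order is a unit times $p$. Any successful argument must therefore produce regularity without special-fiber regularity, and this is what the two tools entirely absent from your proposal accomplish. First, Lemma \ref{pmult}: for a regular divisor $D\subset{\rm Spf}(\CO_{\CN,x})$ with $D_p\subset\CZ(j)_p$, the ideal of $\CZ(pj)\cap D$ in $\CO_{D,x}$ is $p$ times that of $\CZ(j)\cap D$, so $\CD(pj)\cap D$ is cut out by $p$. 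Second, the auxiliary divisor $D=V(f_0+\eta p)$, where $f_i$ is the local equation of $\CD(p^{i}j')$ and the unit $\eta$ is chosen (using that $\BF$ is infinite) so that $f_0+\eta p$ is coprime to $f_0,\dots,f_{a-1}$; here $D$ is regular because $\CD(j')_p$ is regular at $x$ (true at super-general $x$ by Lemma \ref{elemreg}), the earlier $f_i$ are prime by induction on $a$, and $D_p=V(f_0,p)\subset\CZ(p^{a-1}j')_p$. Lemma \ref{pmult} then gives $\CD(p^{a}j')\cap D=D_p$, which is regular, and slicing yields regularity of $\CD(p^{a}j')$ at $x$. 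Without this mechanism (or a genuinely new structure theorem at super-general points), the induction you set up cannot close.
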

\begin{proof}
We proceed by induction on $n$ and may assume that $j\neq 0 $. The claim is known for $n\leq 3$, comp. \cite{T}. Suppose the claim is true for $n-1\geq 3$.

The first part of the following lemma is (in a local version) already implicitly contained in \S 5 of \cite{KR}.

\begin{lemma}\label{bw0}
i) Let $j\in\BV$ a special homomorphism of valuation $0$. Then $\CZ(j)$ is isomorphic to $\CN_{n-1}$. 

\noindent ii) If $j_1,...,j_m\in\BV$ and $h(j,j_i)=0$ for all $i$ (where $j$ is as in i)) then  there are special homomorphisms $j_1',...,j_m'$ for  $\CN_{n-1}$ such that $T(j_1,...,j_m)=T(j_1',...,j_m')$ and such that under the isomorphism of i) $\CZ(j)\cap \CZ(j_1)\cap...\cap \CZ(j_m)$ is identified with $\CZ(j_1')\cap...\cap \CZ(j_m')$.
\end{lemma}
\begin{proof}
By multiplying $j$ by a suitable element in $\BZ_{p^2}^{\times}$ we may assume that $h(j,j)=1$. For any tuple $(X,\iota,\lambda,\rho)$ we may replace the polarization $\lambda$ by a suitable multiple of $\lambda$ such   that  ${\rho}^{\vee}\circ\lambda_{\mathbb{X}}\circ \rho = \lambda$. 
We may also assume that $(\BX, \iota_{\BX}, \lambda_{\BX},\id)\in\CZ(j)(\BF)$. 
Let $(X,\iota,\lambda,\rho)\in\CZ(j)(S)$. Then $e=\rho^{-1}j(\rho^{-1}j)^*$ defines an idempotent of $X$, where we write here $(\rho^{-1}j)^*=\lambda_{\overline\BY}^{-1}\circ (\rho^{-1}j)^{\vee}\circ \lambda$.  Then  we can decompose $X$ as $X=eX\times (1_X-e)X$. The map $(X,\iota,\lambda,\rho)\mapsto ((1_X-e)X,\iota_{|(1_X-e)X}, (1_X-e)^{\vee}\circ\lambda_{| (1_X-e)X},(1_\BX-jj^*)\circ\rho_{| (1_X-e)X})$ then defines an isomorphism $\CZ(j)\rightarrow \CN_{n-1}$. Its inverse is given by sending $(X,\iota,\lambda,\rho)\in \CN_{n-1}(S)$ to $(X\times \overline Y_S\,\iota\times \iota_{\overline Y_S},\lambda\times \lambda_{\overline Y_S},\rho\times \id)\in \CZ(j)(S)\subseteq \CN_{n}(S)$ (here we have identified $\overline Y_S$ with $j \overline Y_S$). One easily checks that these maps are inverse to each other (up to isomorphism).
 
 Any $j_i$ induces an element $j_i'=(1_\BX-jj^*)\circ j_i\in \Hom_{\BZ_{p^2}}(\overline\BY, (1_\BX-jj^*)\BX)\otimes \BQ$ and using the fact that $h(j,j)=1$ and $h(j,j_i)=0$ one easily checks that $T(j_1,...,j_m)=T(j_1',...,j_m')$ and by construction the intersection $\CZ(j)\cap \CZ(j_1)\cap...\cap \CZ(j_m)$ is identified with $\CZ(j_1')\cap...\cap \CZ(j_m')$.
\end{proof}

\begin{lemma}\label{elemreg}
Let $x\in \CN(\BF)$. Let $j\in \BV$ and suppose that  $x \in \CZ(j)(\BF)$ but $x \not\in \CZ(j/p)(\BF)$. Then $\CD(j)$ is regular at $x$. If $x$ is super-general  or if $j$ is of valuation $0$, then also the special fiber $\CD(j)_p$ is regular at $x$. 
\end{lemma}

\begin{proof}
If $x$ is super-general, then the claim follows from Theorem 10.7 in \cite{RTZ}. If $j$ is of valuation $0$, then we know by Lemma \ref{bw0} that $\CD(j)=\CZ(j)\cong \CN_{n-1}$ which is regular and also its special fiber is regular.
Now we proceed by induction on $n$, the induction start is given by the induction start of the theorem. 
Suppose the claim is true for $n-1\geq 3$.
We may assume that  $x$ is not super-general and that  $j$ is not of valuation $0$. It follows that there is a special homomorphism $j_0$ of valuation $0$ which is linearly independent of $j$ and such that $x\in \CD(j_0)(\BF).$ We write $j=\alpha j_0+ \beta j_1$, where $j_1\perp j_0$ and $x\in \CZ(j_1)(\BF)$ but $x\not\in \CZ(j_1/p)(\BF)$ and $\alpha,\beta\in \BZ_{p^2}$. We claim that $\beta $ is not divisible by $p$. For, if $\beta$ and $\alpha$ are divisible by $p$, then $x\in \CZ(j/p)(\BF)$ in contradiction to our hypothesis and if $\beta$ is divisible by $p$ but $\alpha$ is not divisible by $p$, then it follows that the valuation of $j$ is $0$, again a contradiction to our hypothesis. Thus $\beta\in \BZ_{p^2}^{\times}$ and it follows  that locally around $x$ we have $\CD(j)\cap \CZ(j_0)=\CZ(j)\cap \CZ(j_0)=\CZ(j_1)\cap \CZ(j_0)$ and this is  regular at $x$ by the induction hypothesis of the theorem, since $\CZ(j_1)\cap \CZ(j_0)=\CD(j_1)\cap \CZ(j_0)$ (locally around $x$) which can be viewed as a difference divisor in $\CZ(j_0)\cong \CN_{n-1}$ by Lemma \ref{bw0}. 
\end{proof}

\begin{lemma}\label{pmult}
Let  $j \neq 0$ be a special homomorphism of non-negative valuation,  
let
  $x \in \CZ(j)(\BF)$, and let $D\subset {\rm Spf }(\CO_{\CN,x}) $ be  a regular divisor. Suppose that  in $ {\rm Spf }(\CO_{\CN,x})$  we have for the special fibers the inclusion  $D_p \subset \CZ (j)_p$. 
  Let   $(f)$ be the ideal of $\CZ(j)\cap D$ in $\CO_{D,x}$ (i.e., $\CO_{\CZ(j)\cap D,x}=\CO_{D,x}/(f)$).
   Then  the ideal of $\CZ (pj)\cap D$ in $\CO_{D,x}$ is  $( p \cdot f).$ Thus, if $f \neq 0$, the equation of  $\CD(pj) \cap D$ in $\CO_{D,x}$ is given by $p=0.$
\end{lemma}
\noindent This is proved in the same way as Lemma 2.11. in \cite{T}. \qed
\vspace{0.1cm}

We proceed with proving the theorem.
Let $x\in \CZ(j)(\BF)$. Define $\gamma\in \BN$ by the property that $x\in \CZ(j/p^{\gamma})(\BF)$ but $x \not\in \CZ(j/p^{\gamma+1})(\BF)$. Writing $j=p^{\gamma}j'$,  
 it is enough to prove the following claim by induction on $a$.
 \smallskip

{\emph{Claim. For any $a \in \BN$ the  special difference divisor $\CD(p^aj')$ is regular at $x$. }}

 \smallskip
\noindent The induction start (i.e. $a=0$) is given by Lemma \ref{elemreg}. 
Suppose now the claim is true for $a-1$.
We distinguish the cases that $\CD(j')_p$ is not regular at $x$ and that $\CD(j')_p$ is regular at $x$. 

Suppose first that $\CD(j')_p$ is not regular at $x$. Then by Lemma \ref{elemreg} $x$ is not super-general and $j'$ is not of valuation $0$ and thus there is a special homomorphism $j_0$ of valuation $0$ such that $x\in \CZ(j_0)(\BF)$ and such that $j'$ and $j_0$ are linearly independent. Thus as in the proof of Lemma \ref{elemreg} we may write $j'=\alpha j_0+ \beta j_1$, where $j_1\perp j_0$ and $x\in \CZ(j_1)(\BF)$ but $x\not\in \CZ(j_1/p)(\BF)$ and $\alpha \in  \BZ_{p^2}$ and $\beta\in \BZ_{p^2}^{\times}$.  Then it follows that $\CD(j_0)\cap \CD(p^aj')=\CD(j_0)\cap \CD(p^a\beta j_1)$ locally around $x$. This is regular at $x$ by the induction hypothesis (on $n-1$) since it can be viewed as a difference divisor in $\CD(j_0)=\CZ(j_0)\cong \CN_{n-1}$ locally around $x$ (Lemma \ref{bw0}). Thus $\CD(p^aj')$ is also  regular at $x$.

Suppose now that $\CD(j')_p$ is regular at $x$. 
Let  $f_i=0$ be the equation of $\CD(p^i\cdot j')$ in $R=\CO_{\CN,x}$ which is a UFD. Then by the induction hypothesis (on $a-1$) we know that all $f_i$ for $i\leq a-1$ are prime elements. 
Next we show that there is a unit $\eta \in R^{\times}$ such that $f_0+\eta p$ is 
 coprime to $f_i$ for all  $i\leq a-1$. For $i=0$ this is true for any unit $\eta$  since $\CZ(j')$ is a relative divisor. Note also  that $f_0+\eta p$ is a prime element for any $\eta$ since  $\CZ(j')_p$ is regular at $x$ and hence also $V(f_0+\eta p)$ is a regular divisor in ${\rm Spf} (R)$ .  For any $z\in R$ denote by $\overline{z}$ its image in $\overline{R}:= R/(p)$.
 Now we claim that if $\overline{f_i}$ and $\overline{f_0}$ do not differ by only a unit, then $f_0+\eta p$ is coprime to $f_i$ for any $\eta$. To see this note that $f_0+\eta p$  and $ f_i$ are prime, hence if they  are not coprime they only differ by a unit, hence in this case also $\overline{f_i}$ and $\overline{f_0}$ only differ by a unit in $\overline{R}$.
 
 Suppose that $i_1,...,i_l$ are the indices such that $\overline{f_{i_1}},...,\overline{f_{i_l}} $ each differs from $\overline{f_0}$ by a unit. This means that after perhaps multiplying the $f_{i_k}$ by suitable units we have $f_{i_k}=f_0+py_{i_k}$ for suitable $y_{i_k}\in R$. Choose now $y\in R$ such that modulo $\mathfrak m$ (the maximal ideal in $R$) the relations 
 $$y\not\equiv -(y_{i_1}+\dotsb + y_{i_l}),\ y\not\equiv y_{i_1}-(y_{i_1}+\dotsb + y_{i_l}), \ \hdots, \ y\not\equiv y_{i_l}-(y_{i_1}+\dotsb + y_{i_l})$$ hold. (This is possible since $\BF$ is an infinite field.) Then it follows that $\eta := y_{i_1}+\dotsb + y_{i_l}+y$ is a unit in $R$. Furthermore $f_{i_k}-(f_0+\eta p)=f_0+py_{i_k}-(f_0+\eta p)=p(y_{i_k}-(y_{i_1}+\dotsb + y_{i_l}+y))$. Since by the above relations $y_{i_k}-(y_{i_1}+\dotsb + y_{i_l}+y) $ is a unit and since $f_{i_k}$ is coprime to $p$, it follows that indeed $f_0+\eta p$ is 
 coprime to  $f_{i_k}$ and thus to $f_i$ for any $i\leq a-1$.
 
 Denote now by $D$ the divisor in ${\rm Spf} (R)$ given by $f_0+\eta p=0$. Then since $\CD(j')_p$ is by  assumption regular at $x$, the special fiber $D_p$ is regular and hence $D$ also is regular. Further $D_p\subseteq \CZ(p^{a-1}j')_p$ in ${\rm Spf} (R)$ and the equation of $\CZ(p^{a-1}j')$ in  $\CO_{D,x}$ is not zero since $f_i$ and $f_0+p\eta$ are coprime for any $i\leq a-1$.
Thus by  Lemma \ref{pmult} the ideal of $D\cap \CD(p^aj')$ in  ${\rm Spf} (R)$ is $(f_0+\eta p,p)=(f_0,p)$, thus $D\cap \CD(p^aj')=D_p$ which is regular (as a closed formal subscheme of ${\rm Spf} (R)$), hence $\CD(p^aj')$ is regular at $x$. This ends the proof.
\end{proof}

\begin{remark}
Using displays for $p$-divisible groups one can show that for $i \geq 1$ the special fiber of $V(f_i)$ in ${\rm Spf} (R)$ is not regular  and hence not equal to the special fiber of $V(f_0)$, hence one can choose $\eta=1$ in the above proof. This reasoning also shows  that in general the intersection of several (even of two) difference divisors is not regular.  
\end{remark}

\subsection*{Acknowledgements}
I thank U. G\"ortz and M. Rapoport  for helpful remarks on the text.

\end{document}